\newtheorem{thm}{Theorem}[section]
\newtheorem{prop}[thm]{Proposition}
\newtheorem{rem}[thm]{Remark}}
\newcommand{\ra}{\rightarrow}
\newcommand{\R}{\mathbb R}
\newcommand{\N}{\mathbb N}
\newcommand{\rd}{\mathrm{d}}
\newcommand{\E}{\mathbb E}
\newcommand{\p}{\mathbb P}
\newcommand{\La}{\Lambda}
\newcommand{\veps}{\varepsilon}
\def\S{\mathcal S}
\def\diag{\mathrm{diag}}
\def\d{\mathrm{d}}
\newcommand{\fin}{\hspace*{\fill}\rule{0.3em}{1ex}}
\newenvironment{proof}{{\bf \noindent Proof.}}{\fin}
\numberwithin{equation}{section}
\begin{document}

\title{Ergodicity and first passage probability of regime-switching geometric Brownian motions\thanks{Supported in part by NSFC (No.11301030), NNSFC (No.11431014), 985-project.}}


\author{Jinghai Shao\footnote{Email:\ shaojh@bnu.edu.cn}\\[0.6cm] {School of Mathematical Sciences, Beijing Normal University, 100875, Beijing, China}}
\maketitle
\begin{abstract}
A regime-switching geometric Brownian motion is used to model a geometric Brownian motion with its coefficients changing randomly according to a Markov chain. In this work, we give a complete characterization of the recurrent property of this process. The long time behavior of this process such as its $p$-th moment is also studied. Moreover,  the quantitative properties of the regime-switching geometric Brownian motion with two-state switching are investigated to show the difference between geometric Brownian motion with switching and without switching. At last, some estimates of its first passage probability are established.
\end{abstract}
\noindent AMS subject Classification (2010):\  60A10, 60J60, 60J10   \\
\noindent \textbf{Keywords}: Ergodicity, Regime-switching diffusions,  Lyapunov functions, First passage probability

\section{Introduction}
In the study of mathematical finance, geometric Brownian motion (GBM) is used to model stock prices in the Black-Scholes model and is the most widely used model of stock price behavior. Let $(Z_t)$ be the solution of following stochastic differential equation (SDE):
\begin{equation}\label{GB}\rd Z_t=\mu Z_t\rd t+\sigma Z_t\rd B_t \end{equation}
with $Z_0=a>0$, where $\mu$, $\sigma$ are constants, and $(B_t)$ is a one-dimensional Brownian motion.  But GBM is not a completely realistic model, and there are several kinds of modifications of (\ref{GB}) to make it more realistic. For instance, the local volatility model and  the stochastic volatility model are well studied models in place of GBM (see \cite{FPS}).

In this work, we shall study another type of modification.
Consider the following regime-switching diffusion process $(X_t, \Lambda_t)_{t\geq 0}$, where
$X_t$ satisfies the SDE:
\begin{equation}\label{SGB}
\rd X_t =\mu_{\Lambda_t}X_t\rd t+\sigma_{\Lambda_t} X_t\rd B_t,
\end{equation}
with $X_0=x_0>0$, $\mu:\S\ra \R$, $\sigma:\S\ra (0,+\infty)$, $\S=\{0,\, 1,\ldots,N\}$. Here $(\Lambda_t)$ is a continuous time Markov chain on the finite state space $\S$ with $Q$-matrix $(q_{ij})_{i,j\in\S}$. Throughout this work, we assume that $(q_{ij})_{i,j\in\S}$ is irreducible and the process $(\La_t)$ is independent of the Brownian motion $(B_t)$.  We call $(X_t)_{t\geq 0}$ a regime-switching geometric Brownian motion (SGBM) in the state space $\S$.
The process $(X_t)$ defined by \eqref{SGB} can be viewed as a geometric Brownian motion living in a random environment which is characterized by a continuous time Markov chain $(\Lambda_t)$. When the process $(\La_t)$ takes different value in $\S$, it means that the environment is in different state. For example, if one uses the process \eqref{SGB} to model a stock price process living in a market which is divided into two kinds of different periods: ``bull" market and ``bear" market, one can take  $\S=\{0,1\}$ and use the state 0 to represent the ``bull" market, and the state 1 to represent the ``bear" market.
The stock market oscillates randomly between the bull market and the bear market, and it is conceivable that the drift $\mu$ in (\ref{GB}) in a bull market takes different value from that in a bear market.
Therefore, it is more practical to use (\ref{SGB}) instead of (\ref{GB}) to model stock price. Refer to \cite{Guo,Guo2} for more background on this model.  Refer to \cite{Guo1,GZ} and references therein for its application in option pricing.

The existence, uniqueness and non-explosiveness of $(X_t, \Lambda_t)$ are guaranteed by the general theory of regime-switching diffusion processes (cf. \cite{YZ} or \cite{Sh15c}). In particular, according to \cite[Lemma 7.1]{YZ}, similar to the geometric Brownian motion without switching, it still holds that
\begin{equation}\label{positive}
 \p\big(X_t\neq 0,\ \forall\,t\geq 0\big)=1,\quad \text{if}\ X_0=x_0>0.
\end{equation}
The process $(X_t)$ can also be expressed explicitly in the following form:
\begin{equation}\label{form-SGB}
X_t=x_0\exp\Big[\int_0^t\big(\mu_{\Lambda_s}-\frac 12\sigma_{\Lambda_s}^2\big)\rd s+\int_0^t \sigma_{\Lambda_s}\rd B_s\Big], \quad t>0.
\end{equation}

In this work, we shall first study the ergodic property and the long time behavior of the SGBM. Although the SGBM has been widely used in mathematical finance, its ergodic properties have not been well studied yet.
The SGBM is a simple example of regime-switching diffusion processes. It has been known that the ergodic properties of  regime-switching diffusion processes are more complicated than that of diffusion processes (see, for instance,  \cite{PS}).
Our present work presents quantitatively  how the coefficients of diffusion process and the switching rate of environment work together to impact the recurrent property of $(X_t,\Lambda_t)$.
Moreover, we refer the readers to \cite{B96, Gh, MY, PS}, \cite{Sh14}-\cite{SX14}, \cite{YX10} and references therein for the recent study on the recurrence, ergodicity, strong ergodicity, stability, and numerical approximation of regime-switching diffusion processes in a more general framework.

Let $(\pi_i)_{i\in \S}$ be the invariant probability measure of $(\La_t)$. Set
\begin{equation}
  \label{n-1}
  \begin{split}
  \Delta_i&=\mu_i-\frac12 \sigma_i^2,\ \lambda_i(p)= p\mu_i+\frac 12p(p-1)\sigma_i^2, \quad p>0,\ i\in \S,\\
  A_p&=Q+\diag(\lambda_0(p),\ldots,\lambda_N(p)),\\
  \eta_p&=-\max_{\gamma\in \mathrm{Spec}(A_p)}\mathrm{Re}\,\gamma,
  \end{split}
\end{equation}
where $\diag(\xi_0,\ldots,\xi_N)$ denotes  the diagonal matrix generated by the vector $(\xi_0,\ldots,\xi_N)$, and $\mathrm{Spec}(A_p)$ stands for the spectrum of $A_p$.
Our main results on the long time behavior and recurrence of SGBM are as follows.
\begin{thm}  \label{t-1}
\begin{itemize}
\item[$(\mathrm{i})$] If $\sum_{i\in\S} \pi_i\Delta_i>0$, then $\lim_{t\ra \infty} X_t=+\infty$ a.s. If $\sum_{i\in\S}\pi_i\Delta_i<0$, then $\lim_{t\ra \infty} X_t=0$ a.s.
\item[$(\mathrm{ii})$] For $p>0$, it holds
\begin{equation}
  \label{p-con} \lim_{t\ra\infty} \frac{\ln \E[X_t^p]}{t}=-\eta_p,
\end{equation} where $\eta_p$ is defined by \eqref{n-1}.
\end{itemize}
\end{thm}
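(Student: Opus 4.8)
The plan is to build everything on the explicit representation \eqref{form-SGB}: since $\mu_i-\tfrac12\sigma_i^2=\Delta_i$, we have $\ln X_t=\ln x_0+\int_0^t\Delta_{\La_s}\,\rd s+M_t$ with $M_t:=\int_0^t\sigma_{\La_s}\,\rd B_s$. For part $(\mathrm{i})$ I divide by $t$ and treat the two terms separately. Because $Q$ is irreducible on the finite set $\S$, the chain $(\La_t)$ is ergodic with invariant law $(\pi_i)$, so the ergodic theorem for finite Markov chains gives $\frac1t\int_0^t\Delta_{\La_s}\,\rd s\to\sum_{i\in\S}\pi_i\Delta_i$ a.s. The process $M_t$ is a continuous local martingale with $\la M\raa_t=\int_0^t\sigma_{\La_s}^2\,\rd s$; since $\sigma$ is bounded on $\S$ and $\frac1t\la M\raa_t\to\sum_{i\in\S}\pi_i\sigma_i^2>0$ a.s.\ (using $\sigma>0$ on $\S$), we get $\la M\raa_\infty=\infty$ a.s., and the strong law of large numbers for martingales yields $M_t/\la M\raa_t\to0$, hence $M_t/t\to0$ a.s. Therefore $\frac1t\ln X_t\to\sum_{i\in\S}\pi_i\Delta_i$ a.s., which gives both assertions of $(\mathrm{i})$.

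For part $(\mathrm{ii})$, I first compute $\E[X_t^p]$ exactly by conditioning on the trajectory $(\La_s)_{0\le s\le t}$. Given that trajectory, $p\int_0^t\sigma_{\La_s}\,\rd B_s$ is centered Gaussian with variance $p^2\int_0^t\sigma_{\La_s}^2\,\rd s$; taking the conditional expectation in $X_t^p=x_0^p\exp\big(p\int_0^t\Delta_{\La_s}\,\rd s+pM_t\big)$ and using the identity $p\Delta_i+\tfrac12p^2\sigma_i^2=p\mu_i+\tfrac12p(p-1)\sigma_i^2=\lambda_i(p)$ gives
\[\E[X_t^p]=x_0^p\,\E\Big[\exp\Big(\int_0^t\lambda_{\La_s}(p)\,\rd s\Big)\Big].\]
Thus the problem reduces to the large-time growth of a Feynman--Kac functional of $(\La_t)$. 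Writing $u_i(t)=\E\big[\exp\big(\int_0^t\lambda_{\La_s}(p)\,\rd s\big)\mid\La_0=i\big]$, the Kolmogorov backward equation shows that $u(t)=(u_i(t))_{i\in\S}$ solves the linear ODE $u'(t)=A_pu(t)$ with $u(0)=\mathbf 1$, so $u(t)=e^{tA_p}\mathbf 1$ and, with $\nu$ denoting the law of $\La_0$, $\E[X_t^p]=x_0^p\,\nu^{\top}e^{tA_p}\mathbf 1$.

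It then remains to identify $\lim_{t\to\infty}\frac1t\ln\big(\nu^{\top}e^{tA_p}\mathbf 1\big)$. The crucial structural fact is that $A_p=Q+\diag(\lambda_0(p),\ldots,\lambda_N(p))$ has nonnegative off-diagonal entries and is irreducible (since $Q$ is), so the Perron--Frobenius theorem applies, for instance after adding a large multiple of the identity: the eigenvalue $\gamma^\ast:=\max_{\gamma\in\mathrm{Spec}(A_p)}\mathrm{Re}\,\gamma=-\eta_p$ is real and algebraically simple, strictly dominates every other eigenvalue in real part, and admits strictly positive left and right eigenvectors $w,v$. Consequently $e^{-t\gamma^\ast}e^{tA_p}\to vw^{\top}/(w^{\top}v)$ as $t\to\infty$; since $\nu$ and $\mathbf 1$ are nonnegative and nonzero while $v,w$ are strictly positive, the limit $(\nu^{\top}v)(w^{\top}\mathbf 1)$ is strictly positive, so $\frac1t\ln\big(\nu^{\top}e^{tA_p}\mathbf 1\big)\to\gamma^\ast=-\eta_p$. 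Combined with $\frac1t\ln x_0^p\to0$ this is \eqref{p-con}.

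I expect the main obstacle to be the spectral analysis of $A_p$: one must invoke Perron--Frobenius in the form valid for irreducible essentially-nonnegative matrices, check that the dominant eigenvalue is simple with strictly positive eigenvectors and strictly exceeds the rest of the spectrum in real part, and --- importantly --- verify that the projections of the nonnegative vectors $\nu$ and $\mathbf 1$ onto the corresponding one-dimensional eigenspace do not vanish, since otherwise the true exponential rate could be strictly below $\gamma^\ast$. A minor technical point in part $(\mathrm{i})$ is the justification of the martingale strong law on $\{\la M\raa_\infty=\infty\}$, which is unproblematic here because $\sigma>0$ on $\S$ forces this event to have probability one.
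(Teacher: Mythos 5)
Your proposal is correct, and its skeleton coincides with the paper's: for (i) the a.s.\ limit $\frac1t\ln X_t\to\sum_{i\in\S}\pi_i\Delta_i$ via the ergodic theorem for $(\La_t)$, and for (ii) the reduction $\E[X_t^p]=x_0^p\,\E\exp\big(\int_0^t\lambda_{\La_s}(p)\,\rd s\big)$ followed by identification of the exponential rate with $-\eta_p$. The differences are in how the two key steps are carried out. In (i) you justify explicitly that the martingale part satisfies $M_t/t\to0$ a.s.\ via $\la M\raa_t=\int_0^t\sigma_{\La_s}^2\,\rd s$ and the martingale strong law; the paper passes over this term silently, so your version is actually the more complete one. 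In (ii) you obtain the Feynman--Kac identity by conditioning on $\sigma(\La_s;s\le t)$ and computing a Gaussian expectation (using $p\Delta_i+\tfrac12p^2\sigma_i^2=\lambda_i(p)$), whereas the paper conditions by working on a product space, applies It\^o's formula to $X_t^p$, and solves a linear ODE piecewise between the jump times of $(\La_t)$ --- the two derivations are equivalent, yours being slightly cleaner. Finally, for the rate itself the paper simply cites the two-sided bound $C_1(p)e^{-\eta_pt}\le\E\exp\big(\int_0^t\lambda(\La_r)\,\rd r\big)\le C_2(p)e^{-\eta_pt}$ from Bardet--Gu\'erin--Malrieu (Proposition 4.1 of \cite{BGM}), while you prove it from scratch: writing the functional as $\nu^{\top}e^{tA_p}\mathbf 1$ via the backward equation and invoking Perron--Frobenius for the irreducible essentially nonnegative matrix $A_p$, including the points that genuinely need checking (simplicity and strict real-part dominance of the top eigenvalue, positivity of the eigenvectors, non-vanishing of the projections of $\nu$ and $\mathbf 1$). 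So your argument is self-contained where the paper delegates to a reference, at the cost of reproving a known spectral estimate; both are valid.
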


\begin{thm}\label{t-2}
\begin{itemize}
  \item[$(\mathrm{i})$]  If $\sum_{i\in\S}\pi_i\Delta_i\neq 0$, then $(X_t,\La_t)$ is transient.
  \item[$(\mathrm{ii})$] If $\sum_{i\in\S}\pi_i\Delta_i=0$,
  then $(X_t,\La_t)$ is null recurrent.
\end{itemize}
\end{thm}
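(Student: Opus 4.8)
The plan is to pass to the logarithm. Set $Y_t:=\ln X_t$; by \eqref{form-SGB} it solves $\rd Y_t=\Delta_{\La_t}\rd t+\sigma_{\La_t}\rd B_t$, so $(Y_t,\La_t)$ is a regime-switching diffusion on $\R\times\S$ with generator $\mathcal L f(y,i)=\frac12\sigma_i^2 f''(y,i)+\Delta_i f'(y,i)+\sum_{j}q_{ij}f(y,j)$, which is translation invariant in $y$. Since $x\mapsto\ln x$ is a homeomorphism of $(0,\infty)$ onto $\R$ conjugating the two processes, $(X_t,\La_t)$ and $(Y_t,\La_t)$ are simultaneously transient, recurrent, positive recurrent, or null recurrent. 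For part~$(\mathrm{i})$, if $\sum_i\pi_i\Delta_i>0$ then Theorem~\ref{t-1}$(\mathrm{i})$ gives $X_t\to+\infty$ a.s., and if $\sum_i\pi_i\Delta_i<0$ it gives $X_t\to0$ a.s.; in either case $X_t$ eventually leaves and never returns to any compact subset of $(0,\infty)$, which is precisely transience. (Equivalently, the ergodic theorem for $(\La_t)$ and the strong law for the continuous martingale $\int_0^t\sigma_{\La_s}\rd B_s$ give $Y_t/t\to\sum_i\pi_i\Delta_i\neq0$, hence $|Y_t|\to\infty$.)

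For part~$(\mathrm{ii})$ assume $\sum_i\pi_i\Delta_i=0$. Because $\pi Q=0$, the vector $(\Delta_i)_{i\in\S}$ lies in the range of $Q$, so one can pick $g=(g_i)_{i\in\S}$ with $\sum_j q_{ij}g_j=-\Delta_i$, i.e. the function $(y,i)\mapsto y+g_i$ is $\mathcal L$-harmonic. By Dynkin's formula $\widetilde Y_t:=Y_t+g_{\La_t}$ is then a martingale, with quadratic variation $[\widetilde Y]_t\ge\int_0^t\sigma_{\La_s}^2\rd s\ge(\min_i\sigma_i^2)\,t\to\infty$ (using $\sigma_i>0$ and $\S$ finite) and jumps bounded by $2\max_i|g_i|$. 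Hence $\widetilde Y$ does not converge, and by the oscillation dichotomy for local martingales with bounded jumps, $\limsup_t\widetilde Y_t=+\infty$ and $\liminf_t\widetilde Y_t=-\infty$ a.s.; as $g$ is bounded the same holds for $Y_t$, and since $t\mapsto Y_t$ is continuous it visits every level of $\R$ at arbitrarily large times a.s. Combined with the recurrence of the irreducible chain $(\La_t)$ and the strong Feller property and irreducibility of the nondegenerate process $(Y_t,\La_t)$, this yields recurrence of $(Y_t,\La_t)$, hence of $(X_t,\La_t)$. (Alternatively one can produce a Lyapunov function: with $g$ as above and a further correction $u=(u_i)$ obtained from a second Poisson equation for $Q$, the function $V(y,i)=\ln(1+y^2)+2g_i/y+2u_i/y^2$ satisfies $\mathcal L V(y,i)\le\bar c/y^2<0$ for $|y|$ large, where $\bar c=-\frac12\sum_i\pi_i\sigma_i^2-\sum_i\pi_i\Delta_i g_i<0$; the last inequality uses $\sum_i\pi_i\Delta_i g_i=-\langle g,Qg\rangle_\pi=\frac12\sum_{i,j}\pi_i q_{ij}(g_i-g_j)^2\ge0$.)

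It remains to exclude positive recurrence. A direct computation shows that the $\sigma$-finite measure $\nu(\rd y,i):=\pi_i\,\rd y$ on $\R\times\S$ is invariant for $(Y_t,\La_t)$: for $f$ smooth with compact support, $\int\mathcal L f\,\rd\nu=\sum_i\pi_i\int_\R\big(\tfrac12\sigma_i^2 f''+\Delta_i f'+\sum_j q_{ij}f(\cdot,j)\big)\rd y=\sum_j\big(\sum_i\pi_i q_{ij}\big)\int_\R f(y,j)\rd y=0$, the first two terms vanishing on integration by parts. Since a recurrent process of this type (nondegenerate, irreducible, strong Feller) is Harris recurrent, its invariant measure is unique up to a multiplicative constant, hence must be $\nu$, which has infinite total mass. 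Therefore $(Y_t,\La_t)$ — and so $(X_t,\La_t)$ — has no finite invariant measure and is null recurrent.

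The main obstacle is the second half of part~$(\mathrm{ii})$, namely ruling out positive recurrence. The naive approach of testing invariance against $y$ and $y^2$ only disposes of invariant measures with finite moments, so one must instead invoke uniqueness of the invariant measure for recurrent Harris processes (as above) or else establish an occupation-time estimate such as $\frac1t\int_0^t\mathbf 1_{\{|Y_s|\le1\}}\rd s\to0$ a.s. A secondary technical point is justifying the martingale oscillation dichotomy in the presence of the jumps of $g_{\La_t}$, and upgrading ``$Y_t$ visits every level'' to genuine recurrence of the pair $(Y_t,\La_t)$ via the irreducibility and strong Feller properties.
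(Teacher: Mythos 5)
Your proposal is essentially correct in outline, but it reaches the theorem by a genuinely different route than the paper, and at three places it leans on external theory where the paper is self-contained. For (i), the paper does not use Theorem \ref{t-1}(i) at all: it builds the Lyapunov pair $h(x)=|x|^{-2}$, $g(x)=|x|^{-3}$ with a Fredholm correction $\xi$ and shows via Dynkin's formula that $\p(\tau=\infty)>0$ for the hitting time of a bounded level set, which gives transience directly. Your law-of-large-numbers argument ($Y_t/t\to\sum_i\pi_i\Delta_i\neq0$, hence $|Y_t|\to\infty$ a.s.) certainly rules out recurrence, but to conclude transience in the sense defined in Section 2 (hitting probabilities strictly less than one for \emph{every} pair) you must invoke the recurrence/transience dichotomy for irreducible nondegenerate regime-switching diffusions (available in \cite{PS,Sh15b}); with that citation your argument is fine and shorter, but as written this step is a gap. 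For the recurrence half of (ii), your martingale $\widetilde Y_t=Y_t+g_{\La_t}$ with $Qg=-\Delta$ and the oscillation dichotomy is a nice alternative, but the upgrade from ``$Y_t$ visits every level at arbitrarily large times'' to recurrence of the \emph{pair} $(Y_t,\La_t)$ (hitting a prescribed $(y,j)$ a.s.) is a real step you only gesture at; it needs, e.g., a uniform lower bound for hitting $(y,j)$ within unit time from $\{y\}\times\S$ plus a Borel--Cantelli argument along successive visits. Your parenthetical Lyapunov alternative is in substance the paper's proof: the paper takes $f_0=\ln|x|$ with two Fredholm corrections $f_1,f_2$, and its constant $\Theta=-\sum_i\pi_i\big(\Delta_i(Q^{-1}\Delta)(i)-\tfrac12\sigma_i^2\big)$ is exactly $-\bar c$ in your notation; your identity $\sum_i\pi_i\Delta_i g_i=\tfrac12\sum_{i,j}\pi_iq_{ij}(g_i-g_j)^2\ge0$ is the same Fredholm positivity fact.

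Where you differ most is in ruling out positive recurrence. The paper argues directly: with $Q\xi=-\Delta$ and the polynomial test functions $h(x,i)=x^p+p\xi_ix^{p-1}$, $g(x,i)=x^q+q\xi_ix^{q-1}$ ($1<q<p<2$), Dynkin's formula and the ratio $R_m\to\infty$ give $\E[\tau_D]=+\infty$, hence no finite mean hitting times. Your route through the infinite invariant measure $\nu(\rd y,i)=\pi_i\,\rd y$ is correct but outsources several nontrivial facts that must be cited: that the recurrent, irreducible, strong Feller process is Harris recurrent; uniqueness of its $\sigma$-finite invariant measure up to a constant; and that positive recurrence (in the paper's hitting-time sense) implies existence of an invariant probability (e.g.\ \cite[Ch.~3--4]{YZ}). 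One further caution: infinitesimal invariance ($\int\mathcal Lf\,\rd\nu=0$ on compactly supported $f$) does not in general imply invariance for an \emph{infinite} measure (Echeverr\'{\i}a-type theorems are for probabilities); here this is easily repaired, since $\nu P_t=\nu$ can be verified directly by Fubini, translation invariance of Lebesgue measure in $y$, and $\pi Q=0$. With these citations and the two repairs noted above, your proof goes through; the paper's version trades this machinery for explicit Lyapunov computations.
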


The argument of Theorem \ref{t-2} relies heavily on the Fredholm alternative. We  recall some basic facts on the Fredholm alternative. Recall that  $Q=(q_{ij})$ denotes the $Q$-matrix of the Markov chain $(\La_t)$. The equation $Qu=v$ is solvable if and only if $\sum_{i\in \S}\pi_i v_i=0$, in which case $u=Q^{-1}v$ is unique up to the addition of a multiple of the vector $\mathbf 1$. Moreover, $\sum_{i\in \S} \pi_i v_i (Q^{-1}v)(i)\leq 0$ for all $v$ satisfying $\sum_{i\in \S}\pi_i v_i=0$ and equality holds if and only if $v_\cdot\equiv 0$.

Secondly, we provide some quantitative description on the SGBM. The aim of this part is twofold: one is due to the requirement of the application of the SGBM; the other is that we want to find the complexity of the regime-switching diffusion processes via this simple linear model. In this part, we focus on the case that $\S=\{0,1\}$. Indeed, the formulas obtained in this part show that although the model of SGBM is simple, its quantitative properties are rather complicated. Here, we calculate the moments of $\ln X_t$ and estimate the first passage probability of $X_t$. We give out   the first and second order moments of $\ln X_t$, and every $n$-th order moment of $\ln X_t$ can be calculated by our method. Then we
provide an estimate of the first passage probability of the process $(X_t)$. The first passage probability plays important role in many research subjects such as in the option pricing and credit risk. Although the SGBM is rather simple from the point of view of stochastic differential equation, the calculation of its moments and its first passage probability is far from trivial. We need to overcome some new difficulties which do not occur in the study of the first passage probability of diffusion processes. Moreover, our results are analytic, and they are not expressed in terms of the Laplace transform. In \cite{Guo3,Hi} the distribution of the first passage time of SGBM in terms of its Laplace transform was studied; in \cite{KB} numerical approximation of the first passage probability for regime-switching processes was studied.

This work is organized as follows. In Section 2, we first study the long time behavior and recurrent properties of  $(X_t,\Lambda_t)$ in a finite state space.  The method depends on the criteria established in \cite{Sh15b} and \cite{PP}. The proofs of Theorem \ref{t-1} and Theorem \ref{t-2} are provided in this part. In Section 3, we calculate the moments of $\ln X_t$ when $(\La_t)$ is a Markov chain on a two-state space $\S=\{0,1\}$. The reason to focus on two-state space is that more explicit formula could be derived in this case.  There we provide explicit formula for the first and second moment of $\ln X_t$, and all its higher moments can be calculated using the same method. Then we provide some upper and lower bounds on the probability
$\p(\tau_a^{(x)}>T)$ when $\sigma_0=\sigma_1$, where $\tau_a^{(x)}=\inf\{t>0;\ X_t=a, X_0=x\}$ for $0<a<x=X_0$.  Via Slepian's lemma, we can transform the study of the first passage probability in the case $\sigma_0\neq \sigma_1$ into the case $\sigma_0=\sigma_1$, and this can provide us an upper estimate of $\p(\tau_a^{(x)}>T)$ when $\sigma_0\neq \sigma_1$.

\section{Long time behavior and recurrence of SGBM}
In this part, we shall study the long time behavior of  $(X_t, \Lambda_t)$ and provide a complete characterization of the recurrent property of $(X_t,\Lambda_t)$ by the method of Lyapunov functions.

Let us recall some basic definitions.
For $x\in (0,\infty) $, $i\in S$, define
\[\tau_{x,i}=\inf\{t\geq 0;\ (X_t, \Lambda_t)=(x,i)\}.\]
For every $x,\,y\in (0,\infty)$ and $i,\,j\in S$, if $\p_{x,i}(\tau_{y,j}<\infty)=1$, then the process $(X_t,\Lambda_t)$ is called \emph{recurrent}; if $\p_{x,i}(\tau_{y,j}<\infty)<1$, then it is called \emph{transient}; if $\E_{x,i}[\tau_{y,j}]<\infty$, then it is called \emph{positive recurrent}. If $(X_t,\La_t)$ is recurrent, but not positive recurrent, it is called \emph{null recurrent}. The process $(X_t,\Lambda_t)$ is called \emph{ergodic}, if there exists a probability measure $\tilde \pi$ on $(0,\infty)\times \S$ such that for every $x\in (0,\infty)$ and $i\in \S$
\[\|P_t((x,i), \cdot)-\tilde\pi\|_{\mathrm{var}}\ra 0,\quad \text{as $t\ra \infty$}.\]
Here $\|\cdot\|_{\mathrm{var}}$ denotes the total variation norm.
Moreover, if there exist  constants $\alpha,\,C(x,i)>0$ such that for every $(x,i)\in (0,\infty)\times \S$, and $t>0$,
\[\|P_t((x,i),\cdot)-\tilde\pi\|_{\mathrm{var}}\leq C(x,i)e^{-\alpha t}, \quad \text{as $t\ra \infty$},\]
then the process $(X_t,\Lambda_t)$ is called exponentially ergodic. These are usual definitions of recurrence for regime-switching processes, and we refer the reader to \cite[Chapter 3]{YZ} for more related discussion.

Let
\[L^{(i)} f(x)=\frac 12 \sigma_i^2x^2\frac{\rd^2 f}{\rd x^2}+\mu_ix\frac{\rd f}{\rd x}, \quad i\in \S, \ f\in C^2(\R).\]
For every function $g $ on $  \S$, define the operator
\[Q g(i)=\sum_{j\neq i}q_{ij}(g_j-g_i),\quad i\in\S.\]
Define
\begin{equation}\label{gen}
\mathscr Af(x,i)=L^{(i)} f(\cdot,i)(x)+Qf(x,\cdot)(i)
\end{equation}
for $f\in C^2(\R\times \S)$. Then $\mathscr A$ is the infinitesimal generator of $(X_t,\Lambda_t)$ (see \cite[Chapter II]{Sk}).

\noindent\textbf{Proof of Theorem \ref{t-1}}:\
(i)\ As $X_t>0$, $t\geq 0$ almost surely, we set $Y_t=\ln X_t$ and apply It\^o's formula to yield that
\begin{equation}
  \label{proc-y} \d Y_t=\d \ln X_t=\Delta_{\La_t} \d t +\sigma_{\La_t}\d B_t, \quad Y_0=\ln x_0,
\end{equation} where $\Delta_i$ is given by \eqref{n-1}.
By the ergodic theorem of Markov chains, we obtain that
\[\lim_{t\ra \infty}\frac{\ln X_t}{t}=\lim_{t\ra \infty} \frac 1t\int_0^t\Delta_{\La_s}\d s=\sum_{i\in \S}\pi_i\Delta_i\quad \mathrm{a.s.}\]
which yields immediately the assertion (i) of Theorem \ref{t-1}.

(ii)\ To make the idea clear, we provide a concise construction of the probability space. Let $(\Omega_1,\mathscr F^1,\p_1)$ be a probability space such that $(B_t)$ is a Brownian motion with respect to a given filtration $(\mathscr F_t)_{t\geq 0}$ on $(\Omega_1,\mathscr F^{1},\p_1)$. Let $(\Omega_2,\mathscr F^2,\p_2)$ be a probability space, and $(\La_t)$ be a Markov chain on it with the $Q$-matrix $(q_{ij})_{i,j\in  \S}$.
Define
\[ (\Omega,\mathscr F,\p)=(\Omega_1\times\Omega_2,\mathscr F^1\times \mathscr F^2,\p_1\times\p_2).\]
Then, in the following we let $(X_t)$ be a solution of \eqref{SGB} with respect to $(B_t)$ and $(\La_t)$ defined on  the probability measure $(\Omega,\mathscr F,\p)$.
Let $\E_{\p_1}$ denote taking expectation with respect to $\p_1$ and similarly define $\E=\E_{\p}=\E_{\p_1\times\p_2}$.

According to It\^o's formula, for $p>0$,
\[\d X_t^p=\big(p\mu_{\La_t}+\frac 12p(p-1)\sigma_{\La_t}^2\big)X_t^p\d t+p\sigma_{\La_t}X_t^p\d B_t.\]
For any $0\leq s<t$,
\begin{align*}
  X_t^p&=X_s^p+\int_s^t \big(p\mu_{\La_r}+\frac 12p(p-1)\sigma_{\La_r}^2\big)X_r^p\d r+\int_s^tp\sigma_{\La_r}X_r^p\d B_r.
\end{align*}
Hence,
\begin{equation}\label{ine-1}
\E_{\p_1}X_t^p=\E_{\p_1}X_s^p+\int_s^t\big(p\mu_{\La_r}+\frac 12p(p-1)\sigma_{\La_r}^2\big)\E_{\p_1} X_r^p\d r.
\end{equation}
Fix time $t>0$, and let $\tau_1<\tau_2<\ldots \tau_M$ be the jumping time of $(\La_t)$ during the period $(0,t)$. Set $\tau_{M+1}=t$, $\tau_0=0$.
Then $r\mapsto p\mu_{\La_r}+\frac 12p(p-1)\sigma_{\La_r}^2$ is continuous during $(\tau_k,\tau_{k+1})$ for $k=0,\ldots, M$. Hence, \eqref{ine-1} implies that
\[\frac{\d \E_{\p_1}X_r^p}{\d r}=\big(p\mu_{\La_r}+\frac 12p(p-1)\sigma_{\La_r}^2\big)\E_{\p_1} X_r^p,\quad r\in (\tau_{k},\tau_{k+1}),\]
and further
\begin{equation}
  \label{ine-2} \E_{\p_1}X_{\tau_{k+1}-}^p=\E_{\p_1}X_{\tau_{k}+}^p\exp\Big(\int_{\tau_{k}}^{\tau_{k+1}}
  p\mu_{\La_r}+\frac 12p(p-1)\sigma_{\La_r}^2\,\d r\Big),
\end{equation} for $k=0,\ldots, M$. Due to the continuity of $r\mapsto \E_{\p_1} X_r^p$,  we obtain that
\begin{align*}
  \E_{\p_1}X_t^p&=x_0^p\exp\Big(\int_0^t\big(p\mu_{\La_r}+\frac 12p(p-1)\sigma_{\La_r}^2\big)\d r\Big),
  \end{align*}
and hence
\[ \E X_t^p=x_0^p\,\E \exp\Big(\!\int_0^t\!\big(p\mu_{\La_r}\!+\!\frac 12p(p-1)\sigma_{\La_r}^2\big)\d r\Big)=x_0^p\,\E\exp\Big(\!\int_0^t\!\lambda(\La_r)\d r\Big).\]
According to \cite[Proposition 4.1]{BGM}, there exist constants $0<C_1(p)<C_2(p)<\infty$ such that
\[ C_1(p) e^{-\eta_pt}\leq \E \exp\Big(\int_0^t \lambda(\La_r)\d r\Big)\leq C_2(p) e^{-\eta_p t}.\]
Consequently,
\begin{gather*}
  \ln x_0^p+\ln C_1(p)-\eta_p t\leq \ln \E X_t^p\leq \ln x_0^p+ \ln C_2(p)-\eta_p t,\\
  \lim_{t\ra \infty}\frac{\ln \E X_t^p}{t}=-\eta_p,
\end{gather*}which is the desired conclusion.

Now we proceed to study the recurrent property of $(X_t,\La_t)$. Due to \eqref{positive}, $Y_t=\ln X_t$ is well-defined for $t\geq 0$ a.s. and satisfies the SDE \eqref{proc-y}.
It is obvious that the recurrent property  of $(X_t,\La_t)$ is equivalent to that of $(Y_t,\La_t)$. We shall use the Lyapunov method to justify the recurrent property of $(Y_t,\La_t)$.

\noindent\textbf{Proof of Theorem 1.2}:\
(i) We shall use the method of \cite[Section 3]{Sh15b}. Set $h(x)=|x|^{-2}$ and $g(x)=|x|^{-3}$ for $|x|\geq 1$. Define
\[\tilde L^{(i)} \rho(x)=\Delta_i \rho'(x)+\frac 12\sigma_i^2\rho''(x),\quad \rho\in C^{\infty}(\R), \ i\in \S.\]
Then
\[\tilde L^{(i)} h(x)=\Big(-2\Delta_i \frac{x}{|x|}+\frac{3\sigma_i^2}{|x|}\Big)g(x),\quad |x|>1.\]

Let us first consider the case $\sum_{i\in\S}\pi_i\Delta_i>0$.
As $\lim_{x\ra +\infty} -2\Delta_i\frac{x}{|x|}+\frac{3\sigma_i^2}{|x|}=-2\Delta_i$, there exist $\veps>0$, $r_1>1$ such that $\sum_{i\in\S} \pi_i(-2\Delta_i+\veps)<0$, and
\[-2\Delta_i\frac{x}{|x|}+\frac{3\sigma_i^2}{|x|}\leq -2\Delta_i+\veps\quad \forall\,x>r_1.\]
According to the Fredholm alternative, there exist a constant $\kappa>0$ and a vector $(\xi_i)_{i\in \S}$ so that
\[Q\xi(i)=-\kappa+2\Delta_i-\veps,\quad i\in\S.\]
Setting $f(x,i)=h(x)+\xi_i g(x)$ for $x>0$, we derive that
\begin{align*}
  \tilde{ \mathscr A} f(x,i)&=\tilde L^{(i)} h(x)+\xi_i\tilde L^{(i)}g(x)+Q\xi(i)g(x)\\
  &\leq \Big(-2\Delta_i+\veps+\xi_i\frac{\tilde L^{(i)} g(x)}{g(x)}+Q\xi(i)\Big)g(x)\\
  &=\Big(-\kappa+\xi_i\frac{\tilde L^{(i)}g(x)}{g(x)}\Big) g(x),\quad x>r_1,\, i\in\S.
\end{align*}
Here the operator $\tilde{\mathscr{A}}$ denotes the generator of the process $(Y_t,\La_t)$. As $\lim_{x\ra +\infty} \frac{\tilde{L}^{(i)} g(x)}{g(x)}=0$, there exists $r_2>r_1>1$ so that
\[-\kappa+\xi_i\frac{\tilde L^{(i)}g(x)}{g(x)} <0,\quad x>r_2,\,i\in \S,\]
and $x\mapsto h(x)+\xi_{\min} g(x)$ is a decreasing function on $[r_2,+\infty)$, where $\xi_{\min}=\min_{i\in\S} \xi_i$.

Take $Y_0=y>r_2$ and $\La_0=i_0$ so that $f(y,i_0)<h(r_2)+\xi_{\min} g(r_2)$. Set $\tau_K=\inf\{t>0;\ Y_t=K\}$, $K>r_2$, $\tau=\inf\{t>0;\ Y_t=r_2\}$.
By Dynkin's formula,
\[\E[f(Y_{t\wedge \tau_K\wedge\tau},\La_{t\wedge \tau_K\wedge\tau})]=f(y,i_0)+\E\int_0^{t\wedge \tau_K\wedge\tau}\tilde{\mathscr{A}}f(Y_s,\La_s)\d s\leq f(y,i_0).\]
Letting $t\ra +\infty$, we get
\[\E[f(K,\La_{\tau_K})\mathbf{1}_{\tau\geq \tau_K}]+\E[f(r_2,\La_{\tau})\mathbf{1}_{\tau,\tau_K}]\leq f(y,i_0),\]
which yields further that
\begin{equation}
  \label{ine-3} \p(\tau>\tau_K)\geq \frac{f(y,i_0)-h(r_2)-\xi_{\min}g(r_2)}{h(K)+\xi_{\min}g(K)-h(r_2)-\xi_{\min}g(r_2)}>0,
\end{equation} in the last step of which we have used the decreasing property of the function $h(x)+\xi_{\min}g(x)$ on $[r_2,+\infty)$. Invoking the fact $\tau_K\ra+\infty$ as $K\uparrow +\infty$ a.s., \eqref{ine-3} yields that
\[\p(\tau=+\infty)>0,\]
which implies that $(Y_t,\La_t)$ is transient.

Analogously, we can prove that $(Y_t,\La_t)$ is also transient when $\sum_{i\in\S}\pi_i\Delta_i<0$ by using the same Lyapunov function $f(x,i)$ but studying its behavior on $x<0$.

(ii)\ Now we consider the case $\sum_{i\in\S} \pi_i\Delta_i=0$. We shall apply the technique used in \cite{PP}. Set $\Theta=-\sum_{i\in \S} \pi_i\big(\Delta_i(Q^{-1}\Delta)(i)-\frac 12\sigma_i^2\big)$ and
\[f_0(x)=    \ln |x|.
\] Then, $\Theta>0$ due to the Fredholm alternative, and $\Theta f_0''(x)<0$ for $x\neq 0$ by direct calculation.
Define $f_1(x,i)$, $f_2(x,i)$ by
\begin{align*}
  f_1(x,i)&=-(Q^{-1} \Delta)(i) f_0'(x),\\
  f_2(x,i)&=Q^{-1}\big(\Delta_\cdot(Q^{-1}\Delta)(\cdot)-\frac 12\sigma_\cdot^2+\Theta\big)(i)f_0''(x).
\end{align*}
which  are well-defined due to the fact $\sum_{i\in\S}\pi_i\Delta_i=0$ and
\[\sum_{i\in \S}\pi_i\big( \Delta_i(Q^{-1} \Delta)(i)-\frac 12\sigma_i^2+\Theta\big)=0.\]
Take $f(x,i)=f_0(x)+f_1(x,i)+f_2(x,i)$ for $|x|\neq 0$, $i\in\S$.
Then
\begin{align*}
  \tilde{\mathscr{A}}f(x,i)&=\Delta_i f_0'(x)+\Delta_i f_1'(x,i)+\Delta_i f_2'(x,i)+\frac 12 \sigma_i^2 f_0''(x)\\
  &\quad +\frac1 2\sigma_i^2 f_1''(x,i)+\frac 12 \sigma_i^2 f_2''(x,i)+Qf_1(x,i)+Qf_2(x,i)\\
  &=-\Delta_i (Q^{-1} \Delta)(i) f_0''(x)+\frac 12 \sigma_i^2 f_0''(x)+\Delta_i f_2'(x,i)\\
  &\quad +\frac 12\sigma_i^2 f_2''(x,i)+Q f_2(x,i)\\
  &= \Theta f_0''(x)-\frac 12 \sigma_i^2(Q^{-1}\Delta)(i)f_0'''(x)+\Delta_i f_2'(x,i)+\frac 12 \sigma_i^2 f_2''(x,i).
\end{align*}
Since $f_0''(x)\ra 0$ as $|x|\ra +\infty$, it is easy to see that there exists $r_3>0$ such that
\[\tilde{\mathscr A} f(x,i)\leq 0\quad \forall\, |x|>r_3,\ i\in \S.\]
Moreover, $\lim_{|x|\ra \infty} f(x,i)=\lim_{|x|\ra \infty} f_0(x)+f_1(x,i)+f_2(x,i)=+\infty$, which yields that $(Y_t,\La_t)$,  and hence $(X_t,\La_t)$, is recurrent.

Next, we shall show further that when $\sum_{i\in\S}\pi_i\Delta_i=0$, $(X_t,\La_t)$, or equivalently $(Y_t,\La_t)$, is null recurrent.  To this aim, we take $1<q<p<2$ and set
\begin{gather*}
  h(x,i)=x^p+p\xi_i x^{p-1}, \ \ g(x,i)=x^q+q\xi_i x^{q-1}, \quad x>0,\ i\in\S,
\end{gather*}
where $\xi_i $ is determined by $Q\xi(i)=-\Delta_i$ for each $i\in\S$ satisfying further $\xi_i>0$, $i\in\S$.
Direct calculation yields that
\begin{align*}
  \tilde{\mathscr A} h(x,i)&=(\frac12 \sigma_i^2+\xi_i)p(p-1)x^{p-2}+\frac 12p(p-1)(p-2)\xi_i x^{p-3},\\
  \tilde{\mathscr A} g(x,i)&=(\frac 12 \sigma_i^2+\xi_i)q(q-1)x^{q-2}+\frac 12q(q-1)(q-2)\xi_i x^{q-3}
\end{align*} for $x>0,\ i\in\S$. As $p,\,q\in (1,2)$, it is easy to obtain that there exist constants $r_1,K>0$ such that
\[\tilde{\mathscr A} g(x,i)\geq 0,\quad 0\leq \tilde{\mathscr A} h(x,i)\leq K \ \text{for} \ x>r_1,\,i\in\S.\]
Let
\[u(x,i)=g(x,i)-C \ \ \text{for}\ x>0,\]
where $C>0$ satisfying $u(r_1,i)=g(r_1,i)-C<0$ for all $i\in \S$.
Set
\[R_n=\frac{\min_{i\in \S} h(n,i)}{\max_{i\in\S} u(n,i)},\]
then it is obvious that $\lim_{n\ra +\infty} R_n=+\infty$.
Denote $D=(0,r_1)$ and $\tau_D=\inf\{t>0; Y_t\in D\}$. Set $\tau_m=\inf\{t>0; Y_t\geq m\}$, then $\lim_{m\ra +\infty} \tau_m=+\infty$ a.s. Taking $Y_0=y>r_1$, $\La_0=i_0\in\S$ so that $u(y, i_0)>0$, Dynkin's formula yields that
\begin{align*}
  \E[(h\!-\!R_m u)(Y_{\tau_D\wedge\tau_m},\La_{\tau_D\wedge\tau_m})]\!-\!(h\!-\!R_m u)(y,i_0)&=\E\int_0^{\tau_D\wedge\tau_m}\!\!\!\tilde{\mathscr A}(h\!-\!R_m u)(Y_r,\La_r)\d r\\
  &\leq K \E[\tau_D\wedge\tau_m].
\end{align*}
Since $h(m,i)-R_m u(m,i)\geq 0$, and $h(r_1,i)-R_m u(r_1,i)\geq 0$ for every $i\in\S$ and $m$ large enough, we obtain
 \begin{equation}\label{ine-4}
 \E[\tau_D\wedge \tau_m]\geq \frac1K\big(R_m u(y,i_0)- h(y,i_0)\big).
 \end{equation}
Letting $m\ra +\infty$, \eqref{ine-4} yields that
\[\E[\tau_D]=+\infty.\]
Hence, $(Y_t,\La_t)$ is not positive recurrent. However, we have shown $(Y_t,\La_t)$ is recurrent. In all, $(Y_t,\La_t)$ is null recurrent.
The proof of Theorem \ref{t-2} is complete.

\section{Some quantitative  properties  of SGBM}
In this section, we shall exploit the quantitative properties of the process $(X_t,\La_t)$. Throughout this section, we only consider the case $(\La_t)$ is a Markov chain on $\S=\{0,1\}$. To emphasize this fact, in this section we denote the $Q$-matrix of $(\La_t)$ by
\[\begin{pmatrix}
-\lambda_0 &\lambda_0\\
\lambda_1 &-\lambda_1
\end{pmatrix},\]
where $\lambda_0,\,\lambda_1$ are two positive constants.  We calculate  the  moments of $\mathrm{ln}\,X_t$ in Subsection 3.1, and provide the estimate of the first passage probability of $(X_t)$ in Subsection 3.2.
\subsection{Moments of $\ln X_t$}\label{subs-mom}
Let us introduce some notations used in the sequel.
Assume that $\Lambda_0=i$ for $i=0$ or $1$. Set
\begin{align*}
&\zeta_1=\inf\{t>0;\ \Lambda_t= 1-i\},\quad
 \zeta_2=\inf\{t>\zeta_1;\ \Lambda_t= i\},\quad \cdots,\\
&\zeta_{2k-1}=\inf\{t>\zeta_{2k-2};\ \Lambda_t=1-i\},\quad
\zeta_{2k}=\inf\{t>\zeta_{2k-1};\ \Lambda_t=i\},\qquad k\geq 2.
\end{align*}
Let $\tau_k=\zeta_k-\zeta_{k-1}$ for $k\geq 1$ with $\zeta_0:=0$, then $(\tau_k)_{k\geq 1}$ are mutually independent random variables, and $\tau_{2k}$ and $\tau_{2k-1}$ are both exponentially distributed with parameters $\lambda_{1-i}$ and
$\lambda_i$ respectively.
Let $\alpha(t)$ and $\beta(t)$ denote respectively the time spent by  the process $(\Lambda_t)$ at the states $\Lambda_0$ and $1-\Lambda_0$ up to the time $t$.
Let $N(t)$ be the total number of transition between state $0$ and $1$ happened during $(0,t)$.

According to \cite{Ra}, the distribution of the sojourn time $\alpha(t)$ when $\Lambda_0=0$ is
\begin{equation}\label{alpha}
\begin{split}
\p(\alpha(t)\in\rd s)
&=\delta_t(s) e^{-\lambda_0 t}+\sum_{k=1}^\infty\Big[\frac{\lambda_0^{k}\lambda_1^{k-1}}{\Gamma(k)^2} s^{k-1}(t-s)^{k-1}e^{-\lambda_0 s}e^{-\lambda_1(t-s)}\\
&\quad +\frac{\lambda_0^{k}\lambda_1^{k}}{\Gamma(k)\Gamma(k+1)} s^k (t-s)^{k-1} e^{-\lambda_0 s}e^{-\lambda_1 (t-s)}\Big]\rd s, \quad s\in [0,t].
\end{split}
\end{equation}

Now we consider the geometric mean of the process $(X_t)$. Let
\begin{equation}
Y_t=\ln\Big(\frac{X_t}{X_0}\Big)=\int_0^t \Delta_{\Lambda_s}\rd s+\int_0^t\sigma_{\Lambda_s}\rd B_s.
\end{equation}
Recall that $\Delta_0$ and $\Delta_1$ are defined by \eqref{n-1}. Due to the independent increasing property of Brownian motion,
we can rewrite $Y_t$ in the form
\begin{equation}\label{form-y}
Y_t=\Delta_0 \alpha(t)+\Delta_1\beta(t)+\sigma_0\xi(\alpha(t))+\sigma_1\eta(\beta(t)),
\end{equation}
where $\xi(u)$ and $\eta(u)$ (for $u>0$) are mutually independent normally distributed random variables with mean $0$ and variance $u$, and $\xi(u),\,\eta(u)$ are independent of the process $(\Lambda_t)$.

\begin{prop}\label{mean}
Assume $\Lambda_0=0$. For $t>0$, it holds
\begin{equation}\label{e-mean-1}
\begin{split}\E[Y_t]&=(\Delta_0-\Delta_1)\E[\alpha(t)]+t\Delta_1\\
&=t\Delta_1+(\Delta_0\!-\!\Delta_1)e^{-\lambda_1 t}\sum_{k=1}^\infty\Big\{\frac{\lambda_0^k\lambda_1^{k-1}t^{2k}}{\Gamma(k)^2}\!\int_0^1u^{k}(1\!-\!u)^{k-1}e^{(\lambda_1\!-\!\lambda_0)tu}\rd u\\
&\quad+\frac{\lambda_0^k\lambda_1^k t^{2k+1}}{\Gamma(k)\Gamma(k+1)}  \! \int_0^1\!u^{k+1}(1\!-\!u)^{k-1}e^{(\lambda_1-\lambda_0)tu}\rd u\Big\}\!+\!(\Delta_0\!-\!\Delta_1)te^{-\lambda_0 t},
\end{split}
\end{equation}
and
\begin{equation}\label{e-mean-3}
\begin{split}
\E[Y_t^2]&= (\Delta_0^2t^2+\sigma_0^2t)e^{-\lambda_0 t}\\
&\quad +\sum_{k=1}^\infty \!\frac{\lambda_0^{k}\!\lambda_1^{k-1}t^{2k-1}}{\Gamma(k)^2}\!e^{\!-\lambda_1 t}\!\int_0^1\!\big(t^2(\Delta_0u\!+\!\Delta_1(1\!-\!u) )^2\!+\!\sigma_0^2 tu \!+\!\sigma_1^2t(1\!-\!u)\big)\\
&\qquad \qquad \qquad \qquad\qquad\qquad\cdot u^{k-1}(1\!-\!u)^{k-1}e^{(\lambda_1\!-\!\lambda_0)tu}\rd u\\
&\quad +\sum_{k=1}^\infty\! \frac{\lambda_0^k\lambda_1^{k}t^{2k}}{\Gamma(k)\Gamma(k+1)}\!e^{\!-\lambda_1 t}\!\int_0^1\!\big(t^2(\Delta_0u\!+\!\Delta_1(1\!-\!u) )^2\!+\!\sigma_0^2 tu \!+\!\sigma_1^2t(1\!-\!u)\big)\\
&\qquad\qquad\qquad\qquad\qquad\qquad \cdot u^k(1\!-\!u)^{k-\!1}e^{(\lambda_1\!-\!\lambda_0)tu}\rd u.
\end{split}
\end{equation}
In particular, when $\lambda_0=\lambda_1$,
\begin{align*}
\E[Y_t]&=t\Delta_1+(\Delta_0-\Delta_1)\frac{t e^{-\lambda_0t}}{2}\big[\cosh(\lambda_0t)+(1+\frac{1}{\lambda_0 t}) \sinh( \lambda_0 t )\big],\\
\E[Y_t^2]&=(\Delta_0^2t^2+\sigma_0^2t)e^{-\lambda_0 t}\\
&\quad+e^{-\lambda_0 t}\Big\{\frac1 4(\Delta_0^2+\Delta_1^2)t^2\Big(\frac{\cosh(\lambda_0 t)}{\lambda_0 t}+\sinh({\lambda_0 t})-\frac{\sinh({\lambda_0 t})}{(\lambda_0 t)^2}\Big)\\
&\quad+\frac12 (\sigma_0^2+\sigma_1^2) t\Big(-\frac{2}{\lambda_0 t}+\frac{\cosh(\lambda_0 t)}{(\lambda_0 t)^2}+\frac{\sinh(\lambda_0 t)}{(\lambda_0 t)^2}\Big)\\
&\quad + \frac 12 \Delta_0\Delta_1 t^2\Big(\cosh(\lambda_0 t)+ \sinh(\lambda_0 t)-\frac{\sinh(\lambda_0 t)}{\lambda_0 t}-\frac{\cosh(\lambda_0 t)}{\lambda_0 t}+\frac{\sinh(\lambda_0 t)}{(\lambda_0 t)^2}\Big)\\
&\quad+\frac 12 \sigma_1^2 t\Big(\cosh(\lambda_0 t)-\frac{\sinh(\lambda_0 t)}{\lambda_0 t}\Big)+\frac 1 4\Delta_1^2 t^2\Big(\cosh(\lambda_0 t)-\frac{\sinh(\lambda_0 t)}{\lambda_0 t}\Big) \\
&\quad+\frac 12 \sigma_0^2 t\Big(-2+\cosh(\lambda_0 t)+\frac{\sinh(\lambda_0 t)}{\lambda_0 t}\Big) +\frac 14 \Delta_0^2 t^2\Big(-4+\cosh(\lambda_0 t)+\frac{3\sinh(\lambda_0 t)}{\lambda_0 t}\Big)\Big\}.
\end{align*}
\end{prop}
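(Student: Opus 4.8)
The plan is to reduce everything to the moments of the sojourn time $\alpha(t)$ by conditioning on the trajectory of $(\La_t)$, and then to insert the explicit density \eqref{alpha}.

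First I would exploit the representation \eqref{form-y}. Conditioning on $(\La_s)_{s\le t}$ (equivalently, on $\alpha(t)$, since $\beta(t)=t-\alpha(t)$ is then determined), the properties of $\xi$, $\eta$ stated below \eqref{form-y} give that $\sigma_0\xi(\alpha(t))$ and $\sigma_1\eta(\beta(t))$ are, conditionally, independent centered Gaussians with variances $\sigma_0^2\alpha(t)$ and $\sigma_1^2\beta(t)$. Hence
\begin{align*}
\E[Y_t\mid \alpha(t)] &= \Delta_0\alpha(t)+\Delta_1\big(t-\alpha(t)\big),\\
\E[Y_t^2\mid \alpha(t)] &= \big(\Delta_0\alpha(t)+\Delta_1(t-\alpha(t))\big)^2+\sigma_0^2\alpha(t)+\sigma_1^2\big(t-\alpha(t)\big).
\end{align*}
Taking expectations yields $\E[Y_t]=(\Delta_0-\Delta_1)\E[\alpha(t)]+t\Delta_1$ and an expression for $\E[Y_t^2]$ that is a linear combination of $\E[\alpha(t)^2]$, $\E[\alpha(t)]$ and constants.

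Next I would substitute the law \eqref{alpha} of $\alpha(t)$. The atom $\delta_t(s)e^{-\lambda_0 t}$ contributes the terms with the factor $e^{-\lambda_0 t}$ (namely $(\Delta_0-\Delta_1)te^{-\lambda_0 t}$ in $\E[Y_t]$ and $(\Delta_0^2t^2+\sigma_0^2t)e^{-\lambda_0 t}$ in $\E[Y_t^2]$, since at $\alpha(t)=t$ one has $\beta(t)=0$). For the series part, the change of variables $s=tu$ turns the $k$-th summand into $\frac{\lambda_0^k\lambda_1^{k-1}t^{2k}}{\Gamma(k)^2}e^{-\lambda_1 t}\int_0^1(\cdots)\,u^{k-1}(1-u)^{k-1}e^{(\lambda_1-\lambda_0)tu}\rd u$, and similarly for the second family, where $(\cdots)$ equals $u$ (for $\E[Y_t]$) or $t^2(\Delta_0u+\Delta_1(1-u))^2+\sigma_0^2 tu+\sigma_1^2 t(1-u)$ (for $\E[Y_t^2]$, using $(\Delta_0\alpha(t)+\Delta_1\beta(t))^2=t^2(\Delta_0u+\Delta_1(1-u))^2$). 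Collecting terms gives exactly \eqref{e-mean-1} and \eqref{e-mean-3}; the interchange of sum and integral is justified by Tonelli's theorem after bounding the integrands by nonnegative ones.

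Finally, for the case $\lambda_0=\lambda_1$ the factor $e^{(\lambda_1-\lambda_0)tu}$ equals $1$, so all integrals become Beta integrals $\int_0^1 u^{a}(1-u)^{b}\rd u=\Gamma(a+1)\Gamma(b+1)/\Gamma(a+b+2)$. Using the $\Gamma$ identities, the coefficients collapse to rational functions of $k$, so the series turn into linear combinations of power series such as $\sum_{k\ge1}\frac{k\,x^{2k}}{(2k)!}=\frac{x\sinh x}{2}$, $\sum_{k\ge1}\frac{(k+1)x^{2k+1}}{(2k+1)!}=\frac{x\cosh x+\sinh x}{2}-x$, and their quadratic-in-$k$ analogues, all obtained by differentiating the series of $\cosh$ and $\sinh$; evaluating at $x=\lambda_0 t$ and regrouping gives the stated closed forms. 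The hard part is this last bookkeeping step for $\E[Y_t^2]$: expanding $(\Delta_0u+\Delta_1(1-u))^2$ produces three Beta integrals per summand, hence several distinct power series with coefficients quadratic in $k$, and matching their sums against the combination of $\cosh(\lambda_0 t)$, $\sinh(\lambda_0 t)$, $\sinh(\lambda_0 t)/(\lambda_0 t)$, $\cosh(\lambda_0 t)/(\lambda_0 t)^2$ and $\sinh(\lambda_0 t)/(\lambda_0 t)^2$ that appears in the statement requires careful accounting, though each individual step is elementary.
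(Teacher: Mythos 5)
Your proposal is correct and follows essentially the same route as the paper: it uses the representation \eqref{form-y} with the independence of $\xi$, $\eta$ and $(\La_t)$ to reduce $\E[Y_t]$ and $\E[Y_t^2]$ to moments of $\alpha(t)$, then inserts the sojourn-time law \eqref{alpha} (atom plus series, with the substitution $s=tu$), and for $\lambda_0=\lambda_1$ evaluates the Beta integrals and resums via the series of $\cosh$ and $\sinh$. The paper's proof is just a terser statement of this same ``direct calculation,'' so no further comparison is needed.
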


\begin{proof}
By the distribution of    $\alpha(t),\, \xi(u),\eta(u)$, we get (\ref{e-mean-1})  by direct calculation.
By the formula (\ref{form-y}), due to the independence of $\xi(u)$ and $\eta(u)$, we get
\[\E[ Y_t^2]=\E\big[\Delta_0^2\alpha(t)^2+\Delta_1^2\beta(t)^2+2\Delta_0\Delta_1\alpha(t)\beta(t)+\sigma_0^2\alpha(t)+\sigma_1^2\beta(t)\big].\]
Then we obtain (\ref{e-mean-3}) according to the distribution of $\alpha(t)$. Using the Taylor expansion of functions $\cosh(x)$ and $\sinh(x)$ the formulae of $\E[Y_t]$ and $\E[Y_t^2]$ in the case $\lambda_0=\lambda_1$ can be obtained.
\end{proof}

Using the same method, every $n$-th order moments of $Y_t$ can be calculated.

\begin{prop}
Assume $\Lambda_0=0$. Then
\begin{equation}\label{limit-y-1}
\lim_{t\ra \infty} \frac{\E[Y_t]}{t}=\frac{\lambda_1\Delta_0+\lambda_0\Delta_1}{\lambda_0+\lambda_1},
\end{equation}
and
\begin{equation}\label{limit-y-2}
\lim_{t\ra \infty}\frac{\E[Y_t^2]}{t^2}=\frac{(\lambda_1\Delta_0+\lambda_0\Delta_1)^2}{(\lambda_0+\lambda_1)^2}.
\end{equation}
\end{prop}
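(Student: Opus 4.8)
The plan is to bypass the explicit series in Proposition \ref{mean} and read off both limits from the ergodic theorem for the Markov chain $(\La_t)$, using a uniform boundedness argument to interchange limit and expectation.

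First I would record the invariant probability: solving $\pi Q=0$ for the two-state $Q$-matrix of this section gives $\pi_0=\lambda_1/(\lambda_0+\lambda_1)$ and $\pi_1=\lambda_0/(\lambda_0+\lambda_1)$. Since $\alpha(t)=\int_0^t\mathbf 1_{\{\La_s=0\}}\,\d s$ and $\beta(t)=t-\alpha(t)$, the ergodic theorem yields $\alpha(t)/t\to\pi_0$ and $\beta(t)/t\to\pi_1$ almost surely as $t\ra\infty$. Because $0\le\alpha(t)/t\le 1$ for all $t$, bounded convergence gives $\E[\alpha(t)]/t\to\pi_0$. Dividing the identity $\E[Y_t]=(\Delta_0-\Delta_1)\E[\alpha(t)]+t\Delta_1$ of Proposition \ref{mean} by $t$ and letting $t\ra\infty$ then produces
\[\lim_{t\ra\infty}\frac{\E[Y_t]}{t}=(\Delta_0-\Delta_1)\pi_0+\Delta_1=\Delta_0\pi_0+\Delta_1\pi_1=\frac{\lambda_1\Delta_0+\lambda_0\Delta_1}{\lambda_0+\lambda_1},\]
which is \eqref{limit-y-1}.

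For \eqref{limit-y-2} I would start from the identity established in the proof of Proposition \ref{mean},
\[\E[Y_t^2]=\E\big[\Delta_0^2\alpha(t)^2+\Delta_1^2\beta(t)^2+2\Delta_0\Delta_1\alpha(t)\beta(t)\big]+\sigma_0^2\E[\alpha(t)]+\sigma_1^2\E[\beta(t)].\]
Dividing by $t^2$: the two $\sigma$-terms are bounded by $(\sigma_0^2\vee\sigma_1^2)/t$ since $\alpha(t),\beta(t)\le t$, hence vanish. The remaining bracket equals $\big(\Delta_0\alpha(t)/t+\Delta_1\beta(t)/t\big)^2$, and, using $\alpha(t)+\beta(t)=t$, the inner expression is a convex combination of $\Delta_0$ and $\Delta_1$, so it is bounded by $|\Delta_0|\vee|\Delta_1|$ and, by the a.s.\ convergence above, tends a.s.\ to $\Delta_0\pi_0+\Delta_1\pi_1$. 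Bounded convergence then gives $\E[Y_t^2]/t^2\to(\Delta_0\pi_0+\Delta_1\pi_1)^2=(\lambda_1\Delta_0+\lambda_0\Delta_1)^2/(\lambda_0+\lambda_1)^2$, which is \eqref{limit-y-2}.

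I do not expect any serious obstacle: the only step requiring attention is the passage from a.s.\ convergence of the sojourn-time fractions to convergence of their first and second moments, and this is immediate because $\alpha(t)/t$ and $\beta(t)/t$ take values in $[0,1]$, so dominated convergence applies with a constant dominating function. One could alternatively extract the two limits directly from the series of Proposition \ref{mean} by a term-by-term asymptotic analysis as $t\ra\infty$, but that route is considerably more cumbersome and yields no additional insight.
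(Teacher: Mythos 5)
Your argument is correct, and it reaches the two limits by a route that is genuinely more self-contained than the paper's. The paper keeps the same overall reduction that you use --- everything is expressed through the moments of the sojourn times $\alpha(t)$, $\beta(t)$ via \eqref{form-y} --- but it obtains the key inputs $\lim_{t\ra\infty}\E[\alpha(t)]/t$ and $\lim_{t\ra\infty}\E[\alpha(t)^2]/t^2$ by citing Tak\'acs's theorems on the asymptotics of $\E[\beta(t)]$ and $\mathrm{Var}[\beta(t)]$ for two-state chains, whereas you derive them from the ergodic theorem for $(\La_t)$ (already invoked in the proof of Theorem \ref{t-1}(i)) together with bounded convergence, using that $\alpha(t)/t\in[0,1]$. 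Your observation that the quadratic part of $\E[Y_t^2]/t^2$ is exactly $\E\big[(\Delta_0\alpha(t)/t+\Delta_1\beta(t)/t)^2\big]$, a bounded convex combination, cleanly disposes of the second moment without needing the variance asymptotics at all. What the external citation buys the paper is a rate (the $O(1/t)$ correction encoded in $\mathrm{Var}[\beta(t)]\sim 2\lambda_0\lambda_1 t/(\lambda_0+\lambda_1)^3$), which is not needed for the statement being proved; what your argument buys is independence from that reference and an argument that generalizes verbatim to finite state spaces with more than two states.
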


\begin{proof}
According to the results \cite[Theorems 6, 7]{Tak}, it holds that
\begin{align*}
&\lim_{t\ra\infty} \frac{\E[\beta(t)]}{t}=\frac{\lambda_0}{\lambda_0+\lambda_1},\quad
\lim_{t\ra\infty}\frac{\mathrm{Var}[\beta(t)]}{t}=\frac{2\lambda_0\lambda_1}{(\lambda_0+\lambda_1)^3}.
\end{align*}
Hence, we have
\begin{align*}
&\lim_{t\ra \infty}\frac{\E[\alpha(t)]}{t}=\frac{\lambda_1}{\lambda_0+\lambda_1}, \ \ \lim_{t\ra \infty}\frac{\E[\alpha(t)^2]}{t^2}=\Big(\frac{\lambda_1}{\lambda_0+\lambda_1}\Big)^2.
\end{align*}
Then, by the formula (\ref{form-y}), we can get the desired results by direct calculation.
\end{proof}

\subsection{First passage probability of $(X_t)$}
For fixed $x>a>0$, let
\[\tau_a^{(x)}=\inf\{t>0;\ X_t=a, X_0=x\}.\]
What we are interested in is the  probability
\[\p(\tau_a^{(x)}\leq t)\quad \text{or }\ \p(\tau_a^{(x)}>t).\]
Although the first passage probability is very useful, the calculating of it is very difficult. Only for several simple cases, explicit formulas exist. For example, explicit formula exists for one-dimensional Brownian motion and piecewise monotone functionals of Brownian motion (cf. \cite{WP07}). To deal with general diffusion processes, one has to rely on some numerical approximation schemes. The regime-switching geometric Brownian motion provides us a simple example to see the difference between the study of first passage probability for diffusion  processes and the study of the first passage probability for regime-switching diffusion processes. Below, one can easily find that the switching of $(\La_t)$ causes new difficulty in calculating the first passage probability.

In this subsection, we consider first the case
$
\sigma_0=\sigma_1=\sigma>0$,
then the case $\sigma_0\neq \sigma_1$.
When $\sigma_0=\sigma_1=\sigma$, one gets by (\ref{form-SGB}) that
\begin{equation}\label{stop-1}\p(\tau_a^{(x)}>t)=\p\big(\min_{0\leq r\leq t}\big\{\sigma  B_r+\Delta_0\alpha(r)+\Delta_1\beta(r)+\tilde a\big\}>0\big),
\end{equation}
where $\tilde a=-\ln(a/x)>0$.
In order to provide upper and lower bounds of $\p(\tau_a^{(x)}>t)$, we introduce two auxiliary processes depending  on $(\Lambda_t)$. To simplify the notation,  we assume that
\begin{equation}\label{delta}
\Delta_0\leq \Delta_1.
\end{equation}
For fixed $T>0$, let
\begin{align*}
  g(t)&=\tilde a+\Delta_0\alpha(t)+\Delta_1\beta(t),\\
  g_u(t)&= \tilde a+\Delta_1 \min\{t,\beta(T)\}+ \Delta_0 (t-\beta(T))\mathbf{1}_{\beta(T)<t\leq T},\\
  g_l(t)&=\tilde a+\Delta_0 \min\{t,\alpha(T)\}+\Delta_1 (t-\alpha(T))\mathbf{1}_{\alpha(T)<t\leq T}.
\end{align*}
Then
\begin{equation}\label{bdy-b}
g_l(t)\leq g(t)\leq g_u(t),\quad  0\leq t\leq T.
\end{equation}
Indeed, by (\ref{delta}), we get, when $t\leq \beta(T)$,
\[g_u(t)-g(t)=\Delta_1 t-\Delta_0\alpha(t)-\Delta_1\beta(t)=(\Delta_1-\Delta_0)\alpha(t)\geq 0;\]
and, when $\beta(T)<t\leq T$,
\[g_u(t)-g(t)=(\Delta_1-\Delta_0)(\beta(T)-\beta(t))\geq 0.\]
Similarly, we can prove the first inequality of (\ref{bdy-b}).

Set $\Phi(z)=\int_{-\infty}^z \frac 1{\sqrt{2\pi}} e^{-\frac{y^2}{2}}\rd y$,
\begin{align*}
F_u(t)&=\frac{1}{\sqrt{2\pi(T-t)}}\int_{-\infty}^0\Big(1-e^{\frac{2\tilde a y}{\sigma(T-t)}}\Big)\Big[\Phi\Big(\frac{\Delta_0 t/\sigma-y}{\sqrt{t}}\Big) \\
&\quad -e^{2\Delta_0 y/\sigma}\Phi\Big(\frac{\Delta_0 t/\sigma +y}{\sqrt{t}}\Big)\Big] e^{-\frac{(y+\Delta_1(T-t)/\sigma+\tilde a/\sigma)^2}{2(T-t)}}\,\rd y,\quad 0<t< T,
\end{align*}
and
\begin{align*}
F_l(t)&=\frac{1}{\sqrt{2\pi t}}\int_{-\infty}^0\Big(1-e^{\frac{2\tilde a y}{\sigma t}}\Big)\Big[\Phi\Big(\frac{\Delta_1 (T-t)/\sigma-y}{\sqrt{T-t}}\Big) \\
&\quad -e^{2\Delta_1 y/\tilde a}\Phi\Big(\frac{\Delta_1 (T-t)/\sigma +y}{\sqrt{T-t}}\Big)\Big] e^{-\frac{(y+\Delta_0 t/\sigma+\tilde a/\sigma)^2}{2t}}\,\rd y, \quad 0<t<T.
\end{align*}

\begin{thm}\label{t-fpp-1}
Assume $\sigma_0=\sigma_1=\sigma>0$, $\Delta_1\geq \Delta_0$. Suppose that the process $(\Lambda_t)$ starts from 0, i.e. $\Lambda_0=0$ a.s. For every $T>0$,
it holds
\begin{equation}\label{fpp-u}
\begin{split}
\p(\tau_a^{(x)}>T)&\leq \Big(\Phi\Big(\frac{\Delta_0 T +\tilde a}{\sigma \sqrt{T}}\Big)-e^{-2\Delta_0\tilde a/\sigma^2} \Phi\Big(\frac{\Delta_0 T-\tilde a}{\sigma\sqrt{T}}\Big)\Big)e^{-\lambda_0 T}\\
&\quad +\sum_{k=1}^\infty \frac{\lambda_0^{k-1}\lambda_1^{k-1}}{\Gamma(k)^2} \int_0^T F_u(t) t^{k-1}(T-t)^{k-1} e^{-\lambda_0 t}e^{-\lambda_1(T-t)}\rd t\\
&\quad+\sum_{k=1}^\infty \frac{\lambda_0^k\lambda_1^k}{\Gamma(k)\Gamma(k+1)} \int_0^T F_u(t) t^{k}(T-t)^{k-1} e^{-\lambda_0 t}e^{-\lambda_1 (T-t)}\rd t,
\end{split}
\end{equation}
and
\begin{equation}\label{fpp-l}
\begin{split}
\p(\tau_a^{(x)}>T)&\geq \Big(\Phi\Big(\frac{\Delta_0 T +\tilde a}{\sigma \sqrt{T}}\Big)-e^{-2\Delta_0\tilde a/\sigma^2} \Phi\Big(\frac{\Delta_0 T-\tilde a}{\sigma\sqrt{T}}\Big)\Big)e^{-\lambda_0 T}\\
&\quad +\sum_{k=1}^\infty \frac{\lambda_0^{k-1}\lambda_1^{k-1}}{\Gamma(k)^2} \int_0^T F_l(t) t^{k-1}(T-t)^{k-1} e^{-\lambda_0 t}e^{-\lambda_1(T-t)}\rd t\\
&\quad+\sum_{k=1}^\infty \frac{\lambda_0^k\lambda_1^k}{\Gamma(k)\Gamma(k+1)} \int_0^T F_l(t) t^{k}(T-t)^{k-1} e^{-\lambda_0 t}e^{-\lambda_1 (T-t)}\rd t.
\end{split}
\end{equation}
\end{thm}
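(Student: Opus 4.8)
The plan is to route everything through the identity \eqref{stop-1}. Writing $g(r)=\tilde a+\Delta_0\alpha(r)+\Delta_1\beta(r)$, so that $\p(\tau_a^{(x)}>T)=\p\big(\min_{0\le r\le T}\{\sigma B_r+g(r)\}>0\big)$, one notes that for a fixed continuous path $B$ the event $\{\min_{0\le r\le T}(\sigma B_r+\varphi(r))>0\}$ is nondecreasing in the function $\varphi$. Feeding in the pathwise bounds $g_l\le g\le g_u$ from \eqref{bdy-b} therefore gives at once
\[
\p\big(\min_{0\le r\le T}\{\sigma B_r+g_l(r)\}>0\big)\le\p(\tau_a^{(x)}>T)\le\p\big(\min_{0\le r\le T}\{\sigma B_r+g_u(r)\}>0\big),
\]
so it remains to evaluate the two outer probabilities. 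The decisive feature, and the reason for introducing $g_u$ and $g_l$, is that --- unlike $g$ itself --- these drifts depend on the chain $(\La_r)_{r\le T}$ only through the single random variable $\alpha(T)$ (equivalently $\beta(T)=T-\alpha(T)$). Since $(B_r)$ and $(\La_r)$ are independent, I would condition on $\alpha(T)$, reduce each inner probability to a deterministic two-slope Brownian first-passage problem, and then integrate against the sojourn-time law \eqref{alpha}.

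Take the upper bound and condition on $\{\alpha(T)=t\}$. On $\{\alpha(T)=T\}$, which has probability $e^{-\lambda_0T}$ (the chain never leaving $0$), one has $g_u(r)=\tilde a+\Delta_0 r$, and $\p\big(\min_{0\le r\le T}\{\sigma B_r+\tilde a+\Delta_0 r\}>0\big)$ is the classical survival probability of a Brownian motion with drift, equal by the reflection principle to $\Phi\big((\Delta_0T+\tilde a)/(\sigma\sqrt T)\big)-e^{-2\Delta_0\tilde a/\sigma^2}\Phi\big((\Delta_0T-\tilde a)/(\sigma\sqrt T)\big)$; this is the first term of \eqref{fpp-u} (and, since $g_l$ coincides with $g_u$ on this event, also the first term of \eqref{fpp-l}). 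For $0<t<T$, $g_u$ is the deterministic function with slope $\Delta_1$ on $[0,T-t]$ and slope $\Delta_0$ on $[T-t,T]$, so I split $[0,T]$ at $T-t$. On $[0,T-t]$ the process $\tfrac1\sigma(\sigma B_r+\tilde a+\Delta_1 r)$ is a Brownian motion with drift $\Delta_1/\sigma$ started at $\tilde a/\sigma>0$, and a Girsanov tilt of the classical reflected (absorbed-at-zero) Brownian density furnishes its defective density at time $T-t$ on the event of never hitting $0$. By the independent increments of $B$ and the strong Markov property, on $[T-t,T]$ a fresh Brownian motion with drift $\Delta_0/\sigma$ starts from that endpoint, and the reflection formula again gives its probability of staying positive over the remaining time $t$. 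Integrating the first defective density against the second probability over positive endpoint values, and then substituting $z=-y$ --- which sends the range to $y<0$ and, by a completing-the-square identity of reflection type, collapses the two Gaussians into a single one times the factor $1-e^{2\tilde a y/(\sigma(T-t))}$ --- produces exactly $F_u(t)$.

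It then remains only to average: $\p(\tau_a^{(x)}>T)\le e^{-\lambda_0T}P_0+\int_0^T F_u(t)\,\p(\alpha(T)\in\rd t)$, where $P_0$ is the closed-form expression above, and inserting the series \eqref{alpha} for the law of $\alpha(T)$ yields the two sums in \eqref{fpp-u}. The lower bound follows by the identical scheme with $g_l$ replacing $g_u$: conditioning on $\{\alpha(T)=t\}$, the drift $g_l$ has slope $\Delta_0$ on $[0,t]$ and slope $\Delta_1$ on $[t,T]$, so the two-phase reflection computation (drift $\Delta_0$ over time $t$, then drift $\Delta_1$ over time $T-t$) gives the conditional survival probability $F_l(t)$, and averaging against \eqref{alpha}, with the same atom contribution $e^{-\lambda_0T}P_0$ at $t=T$, gives \eqref{fpp-l}.

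I expect the two-phase Brownian computation to be the main obstacle. One must first identify the joint law of ``the drifted Brownian motion stays positive on the first subinterval'' and ``its value at the end of that subinterval'' --- that is, the transition density of a drifted Brownian motion absorbed at $0$, obtained by a Girsanov change of measure from the classical reflected density --- then glue it, via the strong Markov property, onto the staying-positive probability for the second subinterval, and finally carry out the reflection substitution $z=-y$ so that the double integral contracts to the stated closed form. Keeping straight which slope governs which half-interval, and which of $\alpha(T)$ and $\beta(T)$ indexes each of the two bounds, is the error-prone bookkeeping, but it presents no conceptual difficulty once \eqref{bdy-b} and \eqref{alpha} are in hand.
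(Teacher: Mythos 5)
Your proposal is correct and follows essentially the same route as the paper: sandwich the drift between $g_l$ and $g_u$ via \eqref{bdy-b}, condition on the chain (equivalently on $\alpha(T)$, since $g_u,g_l$ depend on $(\La_r)_{r\le T}$ only through it), treat the atom $\{\alpha(T)=T\}$ with the classical reflection formula, evaluate the two-slope case by the absorbed/bridge first-passage computation glued at the switch time to obtain $F_u$ and $F_l$, and integrate against the sojourn law \eqref{alpha}. The only presentational difference is that you phrase the first-phase law as a Girsanov-tilted absorbed density while the paper uses the bridge crossing probability times the Gaussian endpoint density, which is the same computation.
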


\begin{proof}
By (\ref{stop-1}) and (\ref{bdy-b}), we get
\[\p(\tau_a^{(x)}>T)\leq \p(\min_{0\leq t\leq T}\{\sigma B_t+g_u(t)\}>0)=\p(\max_{0\leq t\leq T}\{\sigma B_t-g_u(t)\}<0).
\]
Due to the independence of $(\Lambda_t)$ and $(B_t)$,
\begin{align*}
&\p(\max_{0\leq t\leq T}\{\sigma B_t-g_u(t)\}<0)=\E\big[\E\big[\mathbf{1}_{\max\limits_{0\leq t\leq T} \{\sigma B_t-g_u(t)\}<0}\big|\mathscr F_{\Lambda} \big]\big]\\
&=\E\big[\E\big[\mathbf{1}_{\max\limits_{0\leq t\leq \beta(T)}\{\sigma B_t-\tilde a-\Delta_1 t\}<0, \max\limits_{\beta(T)<t\leq T}\{\sigma B_t-\tilde a-\Delta_1\beta(T)-\Delta_0(t-\beta(T))\}<0}\big|\mathscr F_\Lambda\big]\big]\\
&=\E\big[\E\big[\mathbf{1}_{\max\limits_{0\leq t\leq \beta(T)}\{\sigma B_t-\tilde a-\Delta_1 t\}<0, \max\limits_{0<\!t\leq \alpha(T)}\{\sigma B_t-\tilde a-\Delta_1 \beta(T)+\sigma B(\beta(T))\!-\!\Delta_0 t\}<0}\big|\mathscr F_\Lambda\big]\big],
\end{align*}
where $\mathscr F_\Lambda =\sigma(\Lambda_t;\ t\leq T)$.
According to the well known results on the first passage probability of Brownian motion (cf. for example, \cite[pp 375]{Si}),
\[\p\big(\max_{0\leq t\leq \beta(T)}\{ \sigma B_t-\tilde a-\Delta_1 t\}<0\big|B_{\beta(T)}=y\big)=1-\exp\Big(-2\frac{\tilde a(\Delta_1 \beta(T)+ \tilde a -\sigma y)}{\sigma^2\beta(T)} \Big),\]
and
\begin{align*}
&\p\big(\max_{0\leq t\leq \alpha(T)}\{\sigma B_t+\sigma y-\tilde a-\Delta_1 \beta(T)-\Delta_0 t\}<0\big)\\
&=\Phi\Big(\frac{\Delta_0\alpha(T)+\Delta_1\beta(T)+\tilde a-\sigma y}{\sigma \sqrt{\alpha(T)}}\Big)-e^{-2\frac{\Delta_0(\tilde a+\Delta_1 \beta(T)-\sigma y)}{\sigma ^2}}\Phi\Big(\frac{\Delta_0\alpha(T)-\Delta_1\beta(T)-\tilde a +\sigma y}{\sigma \sqrt{\alpha(T)}}\Big).
\end{align*}
Consequently,
if $\alpha(T)=T$, then
\begin{align*}
&\p(\max_{0\leq t\leq T}\{\sigma B_t-g_u(t)\}<0|\alpha(T)=T)=\p(\max_{0\leq t\leq T}\{\sigma B_t-\Delta_0 t-\tilde a\}<0)\\
&=\Phi\Big(\frac{\Delta_0 T+\tilde a}{\sigma \sqrt{T}}\Big)-e^{-2\frac{\Delta_0 \tilde a}{\sigma^2}}\Phi\Big(\frac{\Delta_0T-\tilde a}{\sigma \sqrt{T}}\Big).
\end{align*}
If $0\!<\!\alpha(T)\!<\!T$, we have
\begin{align*}
&\p\big(\max_{0\leq t\leq T}\{\sigma B_t-g_u(t)\}<0\big|0\!<\!\alpha(T)\!<\!T\big)&\\
&\leq  \E\Big[\int_{-\infty}^{(\Delta_1\beta(T)+\tilde a)/\sigma}\Big(1-\exp\Big(-2\frac{\tilde a(\Delta_1 \beta(T)+ \tilde a -\sigma y)}{\sigma^2\beta(T)} \Big)\Big)\\
&\quad\cdot\Big(\Phi\Big(\frac{\Delta_0\alpha(T)\!+\!\Delta_1\beta(T)\!+\!\tilde a\!-\!\sigma y}{\sigma \sqrt{\alpha(T)}}\Big)\!-\!e^{-2\frac{\Delta_0(\tilde a\!+\!\Delta_1 \beta(T)\!-\!\sigma y)}{\sigma ^2}}\Phi\Big(\frac{\Delta_0\alpha(T)\!-\!\Delta_1\beta(T)\!-\!\tilde a \!+\!\sigma y}{\sigma \sqrt{\alpha(T)}}\Big)
 \Big)\\
 &\quad\cdot \frac{1}{\sqrt{2\pi \beta(T)}} e^{-\frac{y^2}{2\beta(T)}}\rd y \Big|0\!<\!\alpha(T)\!<\!T\Big]\Big]\\
 &=\E\Big[\E\Big[ \int_{-\infty}^0\Big(1-e^{\frac{2\tilde a z}{\sigma \beta(T)}}\Big)\Big(\Phi(\frac{\Delta_0\alpha(T)-\sigma z}{\sigma\sqrt{\alpha(T)}}\Big)
-e^{2\frac{\Delta_0  z}{\sigma}}\Phi\Big(\frac{\Delta_0\alpha(T)+\sigma z}{\sigma\sqrt{\alpha(T)}}\Big)\Big)\\
 &\quad \cdot \frac{1}{\sqrt{2\pi \beta(T)}} e^{-\frac{(\sigma z+\Delta_1\beta(T)+\tilde a)^2}{2\sigma^2\beta(T)}}\rd z  \Big|0\!<\!\alpha(T)\!<\!T\Big]\Big].
\end{align*}
Under the assumption $\Lambda_0=0$, one has $\p(\alpha(T)>0)=1$.
Then, invoking the distribution of $\alpha(T)$ and the definition of $F_u(t)$, we can get (\ref{fpp-u}). By a similar argument, we can  get (\ref{fpp-l}), and the proof is completed.
\end{proof}

Now we consider the first passage probability of $(X_t)$ when $\sigma_0\neq \sigma_1$.  Note that $\sigma_0,\,\sigma_1$ stand for the volatility, so it is not restrictable to assume that $\sigma_0,\,\sigma_1>0$.

\begin{prop}
Assume that $\sigma_0>\sigma_1>0$. For each $0<a<x=X_0$ and $T>0$, it holds
\begin{equation}
\begin{split}
&\p(\tau_a^{(x)}>T\big|\mathscr F_\Lambda)\\
&=\p\big(\min_{0\leq t\leq T}\big\{\int_0^t\sigma_{\Lambda_s}\rd B_s+\int_0^t\Delta_{\Lambda_s}\rd s-\ln(a/x)\big\}>0\big|\mathscr F_\Lambda\big)\\
&\leq\p\big(\min_{0\leq t\leq T}\big\{\sqrt{\sigma_0^2-\sigma_1^2}B_{\alpha(t)}+\sqrt{t}\sigma_1\eta_0+\Delta_0\alpha(t)+\Delta_1\beta(t)-\ln(a/x)\big\}>0\big|\mathscr F_{\Lambda}\big),
\end{split}
\end{equation}
where $\eta_0$ is a standard normally distributed random variable, and is independent of $(B_t, \Lambda_t)$.
\end{prop}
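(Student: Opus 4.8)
The plan is to fix the environment by conditioning on $\mathscr F_\Lambda=\sigma(\La_s;\ s\le T)$ and to recognize each side as a first passage probability of a centred Gaussian process, after which Slepian's inequality produces the comparison. First I would dispose of the equality: since $X_0=x>a$ and $(X_t)$ has continuous paths with $X_t>0$ for all $t$ (by \eqref{positive}), one has $\{\tau_a^{(x)}>T\}=\{X_t>a,\ \forall\,t\in[0,T]\}$, and taking logarithms in \eqref{form-SGB}, where $\ln(X_t/x)=\int_0^t\Delta_{\La_s}\rd s+\int_0^t\sigma_{\La_s}\rd B_s$, this event equals $\{\min_{0\le t\le T}(\int_0^t\sigma_{\La_s}\rd B_s+\int_0^t\Delta_{\La_s}\rd s-\ln(a/x))>0\}$; conditioning on $\mathscr F_\Lambda$ gives the first line. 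From here I write $M_t=\int_0^t\sigma_{\La_s}\rd B_s$ and $g(t)=\int_0^t\Delta_{\La_s}\rd s-\ln(a/x)$, which, since $\La_0=0$ (as in Theorem \ref{t-fpp-1}), equals $\Delta_0\alpha(t)+\Delta_1\beta(t)-\ln(a/x)$; note $g$ is continuous and $\mathscr F_\Lambda$-measurable.

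Next I would set up the two Gaussian processes. Because $(B_t)$ is independent of $(\La_t)$, conditionally on $\mathscr F_\Lambda$ the process $(M_t)_{t\le T}$ is a continuous centred Gaussian process — a Wiener integral of the deterministic bounded map $s\mapsto\sigma_{\La_s}$ — with covariance, for $0\le s\le t$,
\[\E[M_sM_t\mid\mathscr F_\Lambda]=\int_0^s\sigma_{\La_u}^2\rd u=\sigma_1^2 s+(\sigma_0^2-\sigma_1^2)\alpha(s),\]
using $\sigma_{\La_u}^2=\sigma_1^2+(\sigma_0^2-\sigma_1^2)\mathbf 1_{\{\La_u=0\}}$ and $\alpha(s)=\int_0^s\mathbf 1_{\{\La_u=0\}}\rd u$. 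Putting $\widetilde M_t=\sqrt{\sigma_0^2-\sigma_1^2}\,B_{\alpha(t)}+\sqrt t\,\sigma_1\eta_0$ with $\eta_0$ standard normal and independent of $(B_t,\La_t)$, the process $(\widetilde M_t)_{t\le T}$ is, conditionally on $\mathscr F_\Lambda$, also continuous and centred Gaussian, with $\E[\widetilde M_t^2\mid\mathscr F_\Lambda]=(\sigma_0^2-\sigma_1^2)\alpha(t)+\sigma_1^2 t=\E[M_t^2\mid\mathscr F_\Lambda]$ and, for $0\le s\le t$,
\[\E[\widetilde M_s\widetilde M_t\mid\mathscr F_\Lambda]=(\sigma_0^2-\sigma_1^2)\big(\alpha(s)\wedge\alpha(t)\big)+\sigma_1^2\sqrt{st}=(\sigma_0^2-\sigma_1^2)\alpha(s)+\sigma_1^2\sqrt{st},\]
since $\alpha$ is non-decreasing and $\eta_0$ is independent of $(B_t)$. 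The decisive elementary fact is $\sqrt{st}\ge s$ for $0\le s\le t$: thus $M$ and $\widetilde M$ have identical conditional variances while $\E[\widetilde M_s\widetilde M_t\mid\mathscr F_\Lambda]\ge\E[M_sM_t\mid\mathscr F_\Lambda]$ for all $s,t\in[0,T]$.

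Then I would invoke Slepian's lemma. For any finite $0=t_0<t_1<\cdots<t_n\le T$, conditionally on $\mathscr F_\Lambda$ the vectors $(-M_{t_i})_{i}$ and $(-\widetilde M_{t_i})_{i}$ are centred Gaussian with equal variances and $\mathrm{Cov}(-M_{t_i},-M_{t_j})\le\mathrm{Cov}(-\widetilde M_{t_i},-\widetilde M_{t_j})$, so Slepian's inequality applied against the ($\mathscr F_\Lambda$-measurable, hence deterministic after conditioning) levels $g(t_i)$ yields
\[\p\big(M_{t_i}+g(t_i)>0\ \forall\,i\mid\mathscr F_\Lambda\big)\le\p\big(\widetilde M_{t_i}+g(t_i)>0\ \forall\,i\mid\mathscr F_\Lambda\big).\]
Letting the finite sets increase to a countable dense $D\subset[0,T]$ with $0\in D$ and using path-continuity of $M+g$ and $\widetilde M+g$ (so that the running-minimum events over $[0,T]$ arise as monotone limits of these finite intersections, modulo null sets since the marginals are non-degenerate for $t>0$) gives $\p(\min_{0\le t\le T}(M_t+g(t))>0\mid\mathscr F_\Lambda)\le\p(\min_{0\le t\le T}(\widetilde M_t+g(t))>0\mid\mathscr F_\Lambda)$, which is the asserted inequality once $M_t+g(t)$ and $\widetilde M_t+g(t)$ are written out.

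I expect the main obstacle to be getting the comparison oriented correctly rather than any hard estimate: one must negate to cast the one-sided running minimum in the form $\{X_i\le\lambda_i\ \forall\,i\}$ required by Slepian's lemma and then check that it is $\widetilde M$, not $M$, that carries the larger covariances; relatedly, the very shape of the comparison process is forced by the need for the covariance $\sigma_1^2\sqrt{st}$ of the added term $\sqrt t\,\sigma_1\eta_0$ to dominate $\sigma_1^2 s$ while leaving the variance $\sigma_1^2 t$ unchanged. The passage from finitely many time points to the running minimum over $[0,T]$ is routine (separability and continuity of the Gaussian paths, plus a monotone limit), needing only the customary care about strict versus non-strict inequalities.
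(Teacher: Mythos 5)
Your proposal is correct and follows essentially the same route as the paper: condition on $\mathscr F_\Lambda$, compare the conditionally Gaussian process $\int_0^t\sigma_{\La_s}\rd B_s$ (variance $\sigma_1^2t+(\sigma_0^2-\sigma_1^2)\alpha(t)$, covariance $\sigma_1^2s+(\sigma_0^2-\sigma_1^2)\alpha(s)$) with $\sqrt{\sigma_0^2-\sigma_1^2}\,B_{\alpha(t)}+\sqrt{t}\,\sigma_1\eta_0$ via Slepian's lemma at finitely many time points, then pass to the running minimum by continuity. The only differences are cosmetic: you compute the conditional covariance directly as a Wiener integral instead of the paper's jump-time decomposition, and you negate the process to fit Slepian's one-sided events where the paper invokes the symmetry of Brownian motion.
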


\begin{proof}
For clarity of the idea, let us introduce a probability space $(\Omega,\p)$. Let $\Omega=\Omega_1\times\Omega_2=C([0,\infty);\ \R\times S)$. Then there exists  a probability measure $\p=\p_1\times \p_2$ on $\Omega$ such that $\omega=(\omega_1(\cdot),\omega_2(\cdot))$ satisfying that $(\omega_1(t))$ is a Brownian motion under $\p_1$ on $\Omega_1$ and $(\omega_2(t))$ is a $Q$-process with $Q$-matrix $\begin{pmatrix} -\lambda_0&\lambda_0\\ \lambda_1&-\lambda_1\end{pmatrix}$ under $\p_2$ on $\Omega_2$. Set $B_t(\omega)=\omega_1(t)$ and $\Lambda_t(\omega)=\omega_2(t)$, then under $\p$, $(B_t)$ and $(\Lambda_t)$ satisfy the condition used in the definition of the process $(X_t, \Lambda_t)$.
Moreover,
\begin{align*}
&\p\big(\min_{t\leq T}\big\{\int_0^t\sigma_{\Lambda_s}\rd B_s+\Delta_0\alpha(t)+\Delta_1\beta(t)-\ln(a/x)\big\}>0\big|\sigma(\Lambda_t;t\leq T)\big)\\
&=\p_1\big(\max_{t\leq T}\big\{\int_0^t\sigma_{\Lambda_s}\rd B_s-\Delta_0\alpha(t)+\Delta_1\beta(t)+\ln(a/x)\big\}<0\big).
\end{align*}
Recall the definition of $\zeta_k$ in the beginning of Subsection \ref{subs-mom}. For $t\in (\zeta_{2k},\zeta_{2k+1}]$, we have
\begin{align*}
&\int_0^t\sigma_{\Lambda_s}\rd B_s=\sigma_0 B_{\zeta_1}+\sigma_1 (B_{\zeta_2}-B_{\zeta_1})+\ldots+\sigma_0(B_t-B_{\zeta_{2k}})\\
&=\sigma_0B^{(1)}(\alpha(t))+\sigma_1B^{(2)}(\beta(t)),
\end{align*}
where $B^{(1)}(t)$ and $B^{(2)}(t)$ are independent normally distributed random variables for each $t>0$, which satisfies
\[\E_{\p_1}[B^{(i)}(t)]=0,\ \E_{\p_1}[B^{(i)}(t)^2]=t,\quad i=1,\,2.
\]
For $N\in \N$, and $k\in \N$ such that $k/N\leq T$,
set
\begin{equation}
Y_k=\int_0^{k/N}\sigma_{\Lambda_s}\rd B_s=\sigma_0B^{(1)}(\alpha(k/N))+\sigma_1B^{(2)}(k/N).
\end{equation}
Then $(Y_k)_k$ are Gaussian random variables satisfying
\[\E_{\p_1} [Y_k]=0,\ \E_{\p_1}[Y_k^2]=\sigma_0^2\alpha(k/N)+\sigma_1^2\beta(k/N).\]
Moreover, for every $k<j$ with $j/N\leq T$,
\[Y_j=\int_0^{j/N}\sigma_{\Lambda_s}\rd B_s=Y_k+\int_{k/N}^{j/N}\sigma_{\Lambda_s}\rd B_s.\]
Hence, \[
\E_{\p_1}[Y_kY_j]=\E_{\p_1}[Y_k^2]=\sigma_0^2\alpha(k/N)+\sigma_1^2\beta(k/N).\]

Let $(\widetilde B_t)$ be a Brownian motion, and $\eta_0$ be a standard normally distributed random variable under $\p_1$ on $\Omega_1$, which is independent of $(\widetilde B_t)$.
Set
\begin{equation}
Z_k=\sqrt{\sigma_0^2-\sigma_1^2} \widetilde B_{\alpha(k/N)}+\sqrt{k/N} \sigma_1\eta_0,\quad k/N\leq T.
\end{equation}
then $(Z_k)_k$ are Gaussian random variables with
\[\E_{\p_1}[Z_k]=0,\quad \E_{\p_1}[Z_k^2]=\sigma_0^2\alpha(k/N)+\sigma_1^2\beta(k/N)=\E_{\p_1}[Y_k^2],\]
and for $k<j$ with $j/N\leq T$,
\begin{align*}
&\E_{\p_1}[Z_k Z_j]=(\sigma_0^2-\sigma_1^2)\alpha(k/N)+\sigma_1^2\sqrt{\frac{kj}{N^2} }>(\sigma_0^2-\sigma_1^2)\alpha(k/N)+\sigma_1^2k/N=\E_{\p_1}[Y_kY_j].
\end{align*}
Therefore, according to Slepian's lemma (cf. \cite[pp 74]{LT}), we have
\begin{align*}
&\p_1\big(\max_{k;k/N\leq T}\big\{Y_k-\Delta_0\alpha(k/N)-\Delta_1\beta(k/N)+\ln(a/x)\big\}<0\big)\\
&\leq \p_1\big(\max_{k;k/N\leq T}\big\{Z_k-\Delta_0\alpha(k/N)-\Delta_1\beta(k/N)+\ln(a/x)\big\}<0\big)\\
&=\p_1\big(\max_{k;k/N\leq T}\big\{\sqrt{\sigma_0^2-\sigma_1^2}\widetilde B_{\alpha(k/N)}+\sqrt{k/N}\sigma_1\eta_0-\Delta_0\alpha(k/N)-\Delta_1\beta(k/N)+\ln(a/x)\big\}<0\big).
\end{align*}
Letting $N\ra \infty$, we get
\begin{align*}
&\p_1\big(\max_{t\leq T}\big\{\int_0^t \sigma_{\Lambda_s}\rd B_s-\Delta_0\alpha(t)-\Delta_1\beta(t)+\ln(a/x)\big\}<0\big)\\
&\leq \p_1\big(\max_{t\leq T}\big\{\sqrt{\sigma_0^2\!-\!\sigma_1^2}\widetilde B_{\alpha(t)}+\sqrt{t}\sigma_1\eta_0-\Delta_0\alpha(t)-\Delta_1\beta(t)+\ln(a/x)\big\}<0\big).
\end{align*}
Then applying the symmetry of the Brownian motion, we get the desired conclusion.
\end{proof}
\begin{rem}
According to this proposition, we can transform the situation $\sigma_0\neq \sigma_1$ to the situation $\sigma_0=\sigma_1$ via Slepian's lemma. Combining with Theorem \ref{t-fpp-1}, we can obtain an upper bound of $\p(\tau_a^{(x)}>T)$ when $\sigma_0\neq \sigma_1$.
\end{rem}

\noindent\textbf{Acknowledgments.}\ The author is grateful to professors Mu-Fa Chen and Yong-Hua Mao for their valuable discussion.

\end{document}